\documentclass[12pt, reqno]{amsart}

\usepackage{
amsmath,
amssymb,
amsthm,
mathrsfs,
amsfonts,
ytableau,
enumitem,
comment,
upgreek,
soul,
yhmath,
mathtools,
shuffle,
kotex,
array,
caption,
tabularx,
}

\usepackage{thmtools}

\usepackage[linktocpage]{hyperref}

\hypersetup{
    colorlinks=true,
    linkcolor = blue,
    citecolor=magenta,
    urlcolor=cyan,
}

\usepackage[capitalise, nameinlink]{cleveref}

\crefname{equation}{}{}
\crefname{figure}{{\sc Figure}}{{\sc Figure}}

\numberwithin{equation}{section} 
\numberwithin{figure}{section}
\numberwithin{table}{section}

\newtheorem{theorem}{Theorem}[section]

\newtheorem{lemma}[theorem]{Lemma}

\newtheorem{conjecture}[theorem]{Conjecture}

\newtheorem*{claim*}{Claim}

\theoremstyle{definition}
\newtheorem{algorithm}[theorem]{Algorithm}
\newtheorem{example}[theorem]{Example}

\usepackage[euler]{textgreek}

\usepackage{tikz,tikz-cd}

\usepackage[colorinlistoftodos, textwidth = 2.3cm]{todonotes}

\ytableausetup{mathmode, boxsize=1.2em}

\def\Z{\mathbb Z}
\def\C{\mathbb C}

\newcommand{\nc}{\newcommand}

\nc{\SG}{\mathfrak{S}}
\nc{\SRT}{\mathrm{SRT}}
\nc{\comp}{\mathrm{comp}}
\nc{\Des}{\mathrm{Des}}
\nc{\set}{\mathrm{set}}
\nc{\ch}{\mathrm{ch}}
\nc{\id}{\mathrm{id}}
\nc{\Sym}{\mathrm{Sym}}
\nc{\QSym}{\mathrm{QSym}}
\nc{\Ext}{\mathrm{Ext}}
\nc{\source}{\mathsf{source}}
\nc{\sink}{\mathsf{sink}}
\nc{\len}{\mathsf{len}}
\nc{\ind}{\mathtt{index}}
\nc{\col}{\mathsf{col}}
\nc{\row}{\mathsf{row}}
\nc{\rad}{\mathrm{rad}}
\nc{\IGLT}{\mathrm{IGLT}}
\nc{\SYT}{\mathrm{SYT}}
\nc{\Top}{\mathsf{Top}}
\nc{\Bot}{\mathsf{Bot}}
\nc{\pr}{\mathsf{pr}}
\nc{\Gr}{\mathrm{Gr}}
\nc{\RS}{\star}

\nc{\calS}{\mathcal{S}}
\nc{\calI}{\mathcal{I}}
\nc{\calR}{\mathcal{R}}
\nc{\calG}{\mathcal{G}}
\nc{\calP}{\mathcal{P}}
\nc{\calE}{\mathcal{E}}

\nc{\bfS}{\mathbf{S}}
\nc{\bfF}{\mathbf{F}}
\nc{\bfP}{\mathbf{P}}
\nc{\bfG}{\mathbf{G}}

\nc{\rmr}{\mathrm{r}}
\nc{\rmc}{\mathrm{c}}
\nc{\rmt}{\mathrm{t}}
\nc{\rmw}{\mathrm{w}}
\nc{\rmv}{\mathrm{v}}

\nc{\sfB}{\mathsf{B}}
\nc{\sfb}{\mathsf{b}}
\nc{\sft}{\mathsf{t}}
\nc{\sfp}{\mathsf{p}}
\nc{\sfq}{\mathsf{q}}
\nc{\sfA}{\mathsf{A}}
\nc{\sfD}{\mathsf{D}}

\nc{\scrT}{\mathscr{T}}

\nc{\tH}{\mathtt{H}}
\nc{\tC}{\mathtt{C}}
\nc{\tyd}{\mathtt{yd}}
\nc{\trd}{\mathtt{rd}}
\nc{\tHL}{\mathtt{HL}}

\nc{\sfhA}{\widehat{\mathsf{A}}}
\nc{\sfread}{\mathsf{read}}
\nc{\sfLread}{\Phi}
\nc{\sfhD}{\widehat{\mathsf{D}}}

\nc{\itread}{\mathit{read}}

\nc{\ra}{\rightarrow}
\nc{\opi}{\overline{\pi}}
\nc{\bal}{{\boldsymbol{\upalpha}}}
\nc{\bfpi}{\boldsymbol{\uppi}}
\nc{\tGam}{\widetilde{\Gamma}}
\nc{\hGam}{\widehat{\Gamma}}

\nc{\GS}[1]{U_{#1}}
\nc{\GSm}[2]{U_{#1;#2}}
\nc{\grco}[2]{(\underline{#1,#2})}
\nc{\IGLTm}[2]{\mathrm{IGLT}(#1)_{#2}}
\nc{\Par}{\mathsf{Par}}
\nc{\sh}{\mathrm{sh}}
\nc{\oPsi}{\overline{\Psi}}

\nc{\Hmod}{H_n(0)\text{-mod}}

\nc{\yh}[1]{\todo[size=\tiny,color=cyan!10]{#1 \\ \hfill --- Young-Hun}}
\nc{\YH}[1]{\todo[size=\tiny,inline,color=cyan!10]{#1
		\\ \hfill --- Young-Hun}}

\nc{\nt}[1]{\todo[size=\tiny,color=exgreen!10]{#1 \\ \hfill --- Note}}
\nc{\NT}[1]{\todo[size=\tiny,inline,color=exgreen!10]{#1
		\\ \hfill --- Note}}

\definecolor{purple}{rgb}{0.44, 0.0, 1.0}
\definecolor{exgreen}{cmyk}{0.8, 0.1, 1, 0}

\definecolor{yhblue}{rgb}{0,0,0.6}

\newenvironment{red}{\relax\color{red}}{\hspace*{.5ex}\relax}
\newenvironment{blue}{\relax\color{yhblue}}{\hspace*{.5ex}\relax}
\newenvironment{green}{\relax\color{wsgreen}}{\hspace*{.5ex}\relax}
\newenvironment{magenta}{\relax\color{magenta}}{\hspace*{.5ex}\relax}

\nc{\ber}{\begin{red}}
\nc{\er}{\end{red}}
\nc{\beb}{\begin{blue}}
\nc{\eb}{\end{blue}}
\nc{\bema}{\begin{magenta}}
\nc{\ema}{\end{magenta}}
\nc{\begr}{\begin{green}}
\nc{\egr}{\end{green}}

\title[A representation-theoretic interpretation]{A representation-theoretic interpretation of the Schur expansion of two-row genomic Schur functions}

\author[Y.-H. Kim]{Young-Hun Kim}
\address{Department of Mathematics, Seoul Women’s University, Seoul 01797, Republic of Korea}
\email{yhkim@swu.ac.kr, ykim.math@gmail.com}

\thanks{
The author was supported by Basic Science Research Program through the National Research Foundation of Korea(NRF) funded by the Ministry of Education (No. RS-2023-00240377) and a research grant from Seoul Women’s University(2025-0241).
}

\keywords{$0$-Hecke algebra, genomic Schur function, Schur expansion, increasing gapless tableau, standard Young tableau, filtration}

\subjclass[2020]{05E10, 05E05, 20C08}

\begin{document}

\maketitle

\begin{abstract}
Genomic Schur functions were introduced by Pechenik and Yong in connection with the $K$-theory of Grassmannians. Pechenik proved that genomic Schur functions admit a positive expansion in the basis of fundamental quasisymmetric functions and, for partitions with two parts, a positive expansion in the Schur basis. Later, Kim and Yoo constructed $0$-Hecke modules associated with genomic Schur functions and conjectured that the latter expansion admits a representation-theoretic interpretation in terms of $0$-Hecke modules. In this paper, we prove the conjecture of Kim and Yoo, thereby obtaining a representation-theoretic interpretation of the Schur expansion in the two-row case.
\end{abstract}

\section{Introduction}
 
Genomic tableaux were introduced by Pechenik and Yong \cite{17PY} in connection with the $K$-theory of Grassmannians. 
In \cite{17PY2}, Pechenik and Yong defined a symmetric function $U_\lambda$, called the genomic Schur function, as a generating function for genomic tableaux of shape $\lambda$ for each partition $\lambda$. 
They showed that the family $\{U_\lambda\}$ forms a basis of the ring of symmetric functions, and observed that genomic Schur functions are not Schur-positive in general \cite[Example~6.7]{17PY2}. 
This makes it natural to seek positive expansions of $U_\lambda$ in other bases and to ask whether such expansions admit a further interpretation.

A first answer to this question was given by Pechenik \cite{20Pechenik}, who proved that genomic Schur functions are fundamental-positive.
Let $\IGLT(\lambda)_m$ be the set of increasing gapless tableaux of shape $\lambda$ with maximum entry $m$.
Then
\begin{align}\label{eq: expansion of U in F}
U_\lambda
=
\sum_{1 \le m \le |\lambda|}
\ \sum_{T \in \IGLT(\lambda)_m} F_{\comp(T)},
\end{align}
where $F_{\comp(T)}$ denotes the fundamental quasisymmetric function indexed by the descent composition of $T$. 
This formula expresses each homogeneous component of $U_\lambda$ in terms of increasing gapless tableaux and naturally leads to the question of whether this expansion admits a representation-theoretic interpretation.

The relevant representation-theoretic framework is provided by the quasisymmetric characteristic introduced by Duchamp, Krob, Leclerc, and Thibon in \cite{96DKLT}. 
They constructed a ring isomorphism
\[
\ch:\bigoplus_{n\ge 0} G_0(H_n(0)\text{-mod}) \longrightarrow \QSym,
\]
where $G_0(H_n(0)\text{-mod})$ denotes the Grothendieck group of the category of finite-dimensional $H_n(0)$-modules and $\QSym$ denotes the ring of quasisymmetric functions.
In view of this correspondence, various quasisymmetric and symmetric functions have been studied via suitable $0$-Hecke modules; see, for instance, \cite{22BS,15BBSSZ,20CKNO2,24KY,19Searles,15TW}. 
In particular, Searles \cite{19Searles} constructed, for each partition $\mu \vdash m$, an indecomposable $H_m(0)$-module $X_\mu$ satisfying $\ch([X_\mu]) = s_\mu$, where $s_\mu$ denotes the Schur function indexed by $\mu$.

Motivated by Pechenik's fundamental-positive formula, Kim and Yoo \cite{24KY} constructed, for each partition $\lambda$ and each integer $m$ with $1 \le m \le |\lambda|$, an $H_m(0)$-module $G_{\lambda;m}$ whose quasisymmetric characteristic is the $m$th homogeneous component $U_{\lambda;m}$ of $U_\lambda$. 
They also introduced an equivalence relation $\sim_{\lambda;m}$ on $\IGLT(\lambda)_m$ and obtained a direct sum decomposition
\begin{align}\label{intro: summand of GE}
G_{\lambda;m}=\bigoplus_{E\in\mathcal E_{\lambda;m}} G_E.
\end{align}
Here, $\mathcal E_{\lambda;m}$ denotes the set of equivalence classes of $\IGLT(\lambda)_m$ under $\sim_{\lambda;m}$, and $G_E$ denotes the $H_m(0)$-submodule of $G_{\lambda;m}$ spanned by $E$ for each $E \in \mathcal E_{\lambda;m}$.

By contrast, Schur-positivity of $U_\lambda$ holds only in special cases.
In \cite[Proposition~4.3]{20Pechenik}, Pechenik constructed a bijection from $\IGLT(\lambda)_m$ to $\bigcup_{\mu \in \Par(\lambda;m)}\SYT(\mu)$ and used it to prove that if $\lambda \vdash n$ has two parts, then
\begin{align}\label{intro: Pechenik expansion}
U_\lambda
=
\sum_{\max\{\lambda_1,\lambda_2+1\} \le m \le n}
\ \sum_{\mu\in\Par(\lambda;m)} s_\mu,
\end{align}
where $\Par(\lambda;m)$ is defined in \cref{eq: def of Par}.
In \cite[Conjecture~7.1]{24KY}, Kim and Yoo proposed a representation-theoretic interpretation of this formula.
Roughly speaking, they asked whether the summands $G_E$ can be partitioned according to the Schur summands $s_\mu$, and whether each $G_E$ can be realized as a successive quotient in a filtration of $X_\mu$.

The purpose of this paper is to prove the conjecture of Kim and Yoo \cite[Conjecture~7.1]{24KY}. 
For each partition $\lambda$ with $\ell(\lambda) = 2$ and each integer $m$ satisfying $\max\{\lambda_1,\lambda_2+1\}\le m\le n$, we partition $\calE_{\lambda;m}$ into subsets $\calE_\mu$ according to the Schur summands $s_\mu$ in Pechenik's expansion \cref{intro: Pechenik expansion}. 
Moreover, we prove in \cref{thm: main} that for each $\mu \in \Par(\lambda;m)$, the successive quotients in a filtration of the $H_m(0)$-module $X_\mu$ are, up to isomorphism, exactly the modules $G_E$ with $E \in \calE_\mu$.
In this way, when $\ell(\lambda) = 2$, the Schur expansion of $U_\lambda$ admits a representation-theoretic interpretation in terms of $0$-Hecke modules.
The proof uses Pechenik's bijection together with a suitable partial order on $\calE_{\lambda;m}$ to construct filtrations of the corresponding modules $X_\mu$.

\section{Preliminaries}\label{Sec: prem}

For integers $m$ and $n$, define
$[m,n]$ to be the set $\{k \in \Z \mid m \le k \le n\}$ if $m \le n$, and $\emptyset$ otherwise.
Throughout this paper, we assume that $n$ is a nonnegative integer.

\subsection{Compositions, partitions, and Young diagrams}\label{subsec: comp and diag}

A \emph{composition} $\alpha$ of $n$, denoted by $\alpha \models n$, is a finite ordered list of positive integers $(\alpha_1, \alpha_2, \ldots, \alpha_k)$ satisfying $\sum_{i=1}^k \alpha_i = n$.
We call $k =: \ell(\alpha)$ the \emph{length} of $\alpha$ and $n =:|\alpha|$ the \emph{size} of $\alpha$. For convenience we define the empty composition $\emptyset$ to be the unique composition of size and length $0$.

Given $\alpha = (\alpha_1, \alpha_2, \ldots,\alpha_{\ell(\alpha)}) \models n$ and $I = \{i_1 < i_2 < \cdots < i_l\} \subset [1, n-1]$, 
let $\set(\alpha) := \{\alpha_1, \alpha_1+\alpha_2, \ldots, \alpha_1 + \alpha_2 + \cdots + \alpha_{\ell(\alpha)-1}\}$ and $\comp(I) := (i_1, i_2 - i_1, \ldots, n-i_l)$.
The set of compositions of $n$ is in bijection with the set of subsets of $[1, n-1]$ under the correspondence $\alpha \mapsto \set(\alpha)$ (or $I \mapsto \comp(I)$).

For $\lambda = (\lambda_1,\lambda_2,\ldots, \lambda_{\ell(\lambda)}) \models n$ if it satisfies the inequalities $\lambda_1 \ge \lambda_2 \ge \cdots \ge \lambda_{\ell(\lambda)}$, then we say that $\lambda$ is a \emph{partition} of $n$ and denote it by $\lambda \vdash n$.
For $\lambda = (\lambda_1, \lambda_2, \ldots, \lambda_{\ell(\lambda)}) \vdash n$, 
we define the \emph{Young diagram} $\tyd(\lambda)$ of $\lambda$ by a left-justified array of $n$ boxes where the $i$th row from the top has $\lambda_i$ boxes for $1 \le i \le \ell(\lambda)$.
We write $(i,j)$ for the box
in the $i$th row and $j$th column.
Then $(i,j) \in \tyd(\lambda)$ means $1 \le i \le \ell(\lambda)$ and $1 \le j \le \lambda_i$.
A \emph{filling} of $\tyd(\lambda)$ is a function $T: \tyd(\lambda) \ra \Z_{>0}$. 
For any filling $T$ of $\tyd(\lambda)$, we define $\max(T)$ to be the largest integer among $T(i,j)$ with $(i,j) \in \tyd(\lambda)$.
Throughout this paper, we extend $T$ to $\Z^2$ by setting
$T(i,j)=\infty$ for $(i,j) \in \Z_{>0} \times \Z_{>0} \setminus \tyd(\lambda)$
and $T(i,j) = -\infty$ for $(i,j) \notin \Z_{>0} \times \Z_{>0}$.

\subsection{The $0$-Hecke algebra and the quasisymmetric characteristic}\label{subsec: 0-Hecke alg}
The symmetric group $\SG_n$ is generated by simple transpositions $s_i$ with $1 \le i \le n-1$.
An expression for $\sigma \in \SG_n$ of the form $s_{i_1} s_{i_2} \cdots s_{i_p}$ that uses the minimal number of simple transpositions is called a \emph{reduced expression} for $\sigma$. 
The number of simple transpositions in any reduced expression for $\sigma$, denoted by $\ell(\sigma)$, is called the \emph{length} of $\sigma$.

The $0$-Hecke algebra $H_n(0)$ is the unital $\C$-algebra generated by $\pi_1, \pi_2, \ldots,\pi_{n-1}$ subject to the following relations:
\begin{align*}
\pi_i^2 &= \pi_i \quad \text{for $1\le i \le n-1$},\\
\pi_i \pi_{i+1} \pi_i &= \pi_{i+1} \pi_i \pi_{i+1}  \quad \text{for $1\le i \le n-2$},\\
\pi_i \pi_j &=\pi_j \pi_i \quad \text{if $|i-j| \ge 2$}.
\end{align*}
According to \cite{79Norton}, there are exactly $2^{n-1}$ distinct irreducible
$H_n(0)$-modules which are naturally parametrized by the compositions of $n$.
For $\alpha \models n$, the irreducible module $\bfF_\alpha$ corresponding to $\alpha$
is the $1$-dimensional $H_n(0)$-module spanned by a vector $v_\alpha$ with $H_n(0)$-action given by
$\pi_i \cdot v_\alpha = 0$ if $i \in \set(\alpha)$ and
$\pi_i \cdot v_\alpha = v_\alpha$ if $i \notin \set(\alpha)$ for $1 \le i \le n-1$.
Let $G_0(H_n(0))$ be the Grothendieck group of the category of finite-dimensional $H_n(0)$-modules.
For an $H_m(0)$-module $M$ and an $H_n(0)$-module $N$, define the induction product of $M$ and $N$ to be $M \otimes N \uparrow_{H_m(0) \otimes H_n(0)}^{H_{m+n}(0)}$.
Then $\bigoplus_{n \ge 0} G_0(H_n(0))$ has a ring structure under the induction product.

A quasisymmetric function is a power series of bounded degree in variables $x_{1},x_{2},x_{3},\ldots$ with integer coefficients such that for any composition $\alpha=(\alpha_1,\alpha_2,\ldots,\alpha_k)$, the coefficients of $x_{i_1}^{\alpha_1} x_{i_2}^{\alpha_2}\cdots x_{i_k}^{\alpha_k}$ are equal for all $i_1<i_2<\cdots<i_k$.
Let $\QSym$ be the ring of quasisymmetric functions.
For $\alpha \models n$, the \emph{fundamental quasisymmetric function} $F_\alpha$ is defined by $F_\emptyset = 1$ and
\begin{align*}
F_\alpha = \sum_{\substack{1 \le i_1 \le i_2 \le \cdots \le i_n \\ i_j < i_{j+1} \text{ if } j \in \set(\alpha)}} x_{i_1} x_{i_2} \cdots x_{i_n}.
\end{align*}
It is shown in \cite{83Gessel} that $\{F_\alpha \mid \text{$\alpha$ is a composition}\}$ is a basis for $\QSym$.

In~\cite{96DKLT}, Duchamp, Krob, Leclerc, and Thibon introduced the
\emph{quasisymmetric characteristic}, a ring isomorphism
\[
\ch : \bigoplus_{n \ge 0} G_0(H_n(0)) \to \QSym, \quad [\bfF_{\alpha}] \mapsto F_{\alpha}.
\]

\subsection{0-Hecke modules from standard Young tableaux and from increasing gapless tableaux}

In this subsection, $\lambda$ denotes a partition of $n$.

\subsubsection{$H_n(0)$-modules from standard Young tableaux}

A \emph{standard Young tableau} (SYT) of shape $\lambda$ is a filling $S$ of $\tyd(\lambda)$ with $[1,n]$ such that each number occurs exactly once, the entries increase from left to right along rows, and from top to bottom along columns.
We denote by $\SYT(\lambda)$ the set of all standard Young tableaux of shape $\lambda$.
For $S \in \SYT(\lambda)$ and $i \in [1,n-1]$, we say that $i$ is a \emph{descent of $S$} if $i$ appears strictly above $i+1$ in $S$.
We denote by $\Des(S)$ the set of all descents of $S$. 
It is well known that $s_\lambda = \sum_{S \in \SYT(\lambda)} F_{\comp(\Des(S))}$.

In \cite{19Searles}, Searles defines the $H_n(0)$-module, denoted by $X_\lambda$, whose underlying space is the $\C$-span of $\SYT(\lambda)$ and whose action is defined as follows: 
for each $1 \le i \le n-1$ and $S \in \SYT(\lambda)$,
\begin{align*}
\pi_i \cdot S = \begin{cases}
S & \text{if $i$ is strictly left of $i+1$ in $S$},\\
0 & \text{if $i$ and $i+1$ are in the same column of $S$},\\
s_i \cdot S & \text{if $i$ is strictly right of $i+1$ in $S$}.
\end{cases}
\end{align*}
Here, $s_i \cdot S$ is the tableau obtained from $S$ by swapping $i$ and $i+1$. 
The following properties of $X_\lambda$ were shown in \cite{19Searles}:
\begin{enumerate}[label= {\rm(\roman*)}]
\item
$\ch([X_\lambda]) = s_\lambda$, where $s_\lambda$ denotes the Schur function indexed by $\lambda$.
\item
$X_\lambda$ is indecomposable.
\end{enumerate}

\subsubsection{$H_n(0)$-modules from increasing gapless tableaux}
\label{subsec: Hn from IGLT}

An \emph{increasing gapless tableau} of shape $\lambda$ is a filling of $\tyd(\lambda)$ satisfying the following conditions:
\begin{enumerate}[label = {\rm (\arabic*)}]
\item the entries in each row strictly increase from left to right,
\item the entries in each column strictly increase from top to bottom, and
\item the set $T^{-1}(k)$ is nonempty for all $1 \le k \le \max(T)$.
\end{enumerate}
Let $\IGLT(\lambda)$ be the set of all increasing gapless tableaux of shape $\lambda$.
Given $1 \le m \le n$, let $\IGLTm{\lambda}{m} := \{T \in \IGLT(\lambda) \mid \max(T) = m\}$.

In~\cite{20Pechenik}, Pechenik gave the following alternative description of the genomic Schur function, originally defined by Pechenik and Yong in \cite{17PY2}, using increasing gapless tableaux:
\[
\GS{\lambda} := \sum_{1 \le m \le n} \left( \sum_{T \in \IGLT(\lambda)_{m}} F_{\comp(T)} \right)
\]
In this paper, we adopt this formula as the definition of the genomic Schur function.
For each $1 \le m \le n$, let $\GSm{\lambda}{m}$ be the $m$th degree homogeneous component of $\GS{\lambda}$.

Let $1 \le m \le n$ and $T \in \IGLT(\lambda)_m$.
For $1 \le i \le m$, let
\[
(r_\sft^{(i)}(T), c_\sft^{(i)}(T)) \quad \text{and} \quad
(r_\sfb^{(i)}(T), c_\sfb^{(i)}(T))
\]
denote the coordinates of the topmost and bottommost boxes in $T^{-1}(i)$, respectively.
If $T\in \SYT(\lambda)$ or $|T^{-1}(i)|=1$, we set
\[
(r^{(i)}(T),c^{(i)}(T))
:=(r_\sfb^{(i)}(T),c_\sfb^{(i)}(T))
=(r_\sft^{(i)}(T),c_\sft^{(i)}(T)).
\]
When $T$ is clear from the context, we drop the argument $T$ and write simply
$r_\sfb^{(i)}, c_\sfb^{(i)}, r_\sft^{(i)}$, and $c_\sft^{(i)}$.

For $1 \le i \le m-1$, we call $i$ a \emph{descent of $T$} if $r_\sft^{(i)} < r_\sfb^{(i+1)}$ and we denote by $\Des(T)$ the set of all descents of $T$.
We say that a descent $i$ of $T$ is \emph{attacking} if either of the following holds:
\begin{enumerate}[label = \textrm{(\alph*)}]
\item
there exists $(j,k) \in \tyd(\lambda)$ such that $T(j,k)  = i$ and $T(j +1, k) = i + 1$, or

\item 
there exists a box $B \in T^{-1}(i+1)$ that lies weakly above $(r_\sfb^{(i)}, c_\sfb^{(i)})$.
\end{enumerate}
A descent that is not attacking is called \emph{non-attacking}.
We note that if $i$ is a non-attacking descent of $T$, then all $(i+1)$'s lie strictly below and strictly left of $(r_\sfb^{(i)}, c_\sfb^{(i)})$.

In \cite{24KY}, Kim and Yoo define the $H_n(0)$-module, denoted by $\bfG_{\lambda;m}$, whose underlying space is the $\C$-span of $\IGLTm{\lambda}{m}$ and whose action is defined as follows:
for each $1 \le i \le m-1$ and $T \in \IGLTm{\lambda}{m}$,
\begin{align*}
\pi_i \cdot T := 
\begin{cases}
T & \text{if $i$ is not a descent of $T$,}\\
0 & \text{if $i$ is an attacking descent of $T$,}\\ 
s_i \cdot T & \text{if $i$ is a non-attacking descent of $T$}.
\end{cases}
\end{align*}
Here, $s_i \cdot T$ is the tableau obtained from $T$ by swapping all $i$'s and $i+1$'s.
It was shown in \cite[Proposition 5.6]{24KY} that $\ch([\bfG_{\lambda;m}]) = \GSm{\lambda}{m}$.

In contrast to $X_\lambda$, the module $\bfG_{\lambda;m}$ is not indecomposable.
Below we briefly recall the decomposition of $\bfG_{\lambda;m}$.

We denote by $\grco{i}{j}$ the lattice point on the $(i+1)$st horizontal line from the top and the $(j+1)$st vertical line from the left.
For each $T \in \IGLTm{\lambda}{m}$, let 
\[
\calI(T) := \left\{ i \in [1,m] \; \middle| \; |T^{-1}(i)| > 1 \right\}.
\]
For $i \in \calI(T)$, let $\Gamma_i(T)$ be the lattice path from $\grco{r_\sfb^{(i)}}{c_\sfb^{(i)} - 1}$ to $\grco{r_\sft^{(i)} - 1}{c_\sft^{(i)}}$ satisfying the following two conditions:

\begin{enumerate}[label = {\rm (\roman*)}, leftmargin = 4ex, itemsep = 1ex]
\item 
if $\Gamma_i(T)$ passes between two adjacent boxes horizontally, then the entry in the above box is strictly smaller than $i$ and the entry in the below box is weakly greater than $i$;

\item 
if $\Gamma_i(T)$ passes between two adjacent boxes vertically, then the entry in the left box is strictly smaller than $i$ and the entry in the right box is weakly greater than $i$.
\end{enumerate}
The equivalence relation $\sim_{\lambda;m}$ on $\IGLTm{\lambda}{m}$ is defined by $T_{1} \sim_{\lambda;m} T_{2}$ if and only if 
\[
\left\{ \left(\Gamma_i(T_1), T_1^{-1}(i) \right) \; \middle| \; i \in \calI(T_1) \right\} 
= \left\{ \left(\Gamma_i(T_2), T_2^{-1}(i) \right) \; \middle| \; i \in \calI(T_2) \right\}.
\]
We denote by $\calE_{\lambda;m}$ the set of equivalence classes of $\IGLTm{\lambda}{m}$ with respect to $\sim_{\lambda;m}$.
In \cite[Theorem 3.11]{24KY}, it was shown that for each $E \in \calE_{\lambda;m}$ $\C E$ is closed under the $H_m(0)$-action. 
Consequently
\[
\bfG_{\lambda;m} = \bigoplus_{E \in \calE_{\lambda;m}} \bfG_E.
\]

\subsubsection{Representation-theoretic conjecture for Schur expansions of $U_\lambda$ for $\lambda$ with $\ell(\lambda)=2$}

Let $\lambda = (\lambda_1, \lambda_2) \vdash n$, and set $l_\lambda = \max \{\lambda_1, \lambda_2 +1\}$.
For $m \in \Z$ with $l_\lambda \le m < n$, define
\begin{align}\label{eq: lambda(i)}
\begin{aligned}    
\lambda_m^{(1)} &:= 
\begin{cases}
(\lambda_1 - k_m, \lambda_2 - k_m, 1^{k_m})
& \text{if $\lambda_2 > k_m + 1$,}
\\
\emptyset & \text{if $\lambda_2 \le k_m + 1$,}
\end{cases}
\\[1ex]
\lambda_m^{(2)} &:= 
\begin{cases}
(\lambda_1 - k_m, \lambda_2 - k_m + 1, 1^{k_m-1})
& \text{if $\lambda_2 > k_m$,}
\\
\emptyset & \text{if $\lambda_2 \le k_m$,}
\end{cases}
\end{aligned}
\end{align}
where $k_m := n-m$, and $1^{k_m}$ is the partition $(1,1,\ldots,1)$ of length $k_m$.
Note that for $x=1,2$, whenever $\lambda^{(x)}_m \neq \emptyset$, it is a partition of $m$.
For $m \in \Z$ with $l_\lambda \le m \le n$, define 
\begin{align}\label{eq: def of Par}
\begin{aligned}
\mathsf{Par}(\lambda; m) & := 
\begin{cases}
\left\{
\lambda_m^{(1)}
\right\}
\setminus \{\emptyset\} 
& \text{if $m=n$ or $\lambda_1 = \lambda_2$,} \\[1ex]
\left\{
\lambda_m^{(1)}, \lambda_m^{(2)}
\right\} \setminus \{\emptyset\} 
& \text{if $m < n$ and $\lambda_1 > \lambda_2$.}
\end{cases}
\end{aligned}
\end{align}
In \cite[Proposition 4.3]{20Pechenik}, Pechenik proved that
\begin{align}\label{eq: schur expansion for genomic}
U_\lambda = \sum_{l_\lambda \le m \le n} \  \sum_{\mu \in \mathsf{Par}(\lambda; m)} s_{\mu},
\end{align}
with the convention that $s_\mu := 0$ if $\mu$ is not a partition.

In \cite{24KY}, Kim and Yoo proposed the following conjecture providing a representation-theoretic interpretation of \cref{eq: schur expansion for genomic}.
\begin{conjecture}{\rm (\cite[Conjecture 7.1]{24KY})}
\label{conj: Kim and Yoo}
Let $\lambda$ be a partition with $\ell(\lambda) = 2$.
\footnote{In \cite{24KY}, the conjecture is stated for partitions $\lambda$ with $\ell(\lambda)\le 2$. When $\ell(\lambda)=1$, we have $\IGLT(\lambda)=\SYT(\lambda)$, so the conjecture holds trivially. Thus, in this paper, we restrict our attention to partitions $\lambda$ with $\ell(\lambda)=2$.}
For each $l_\lambda \le m \le n$, there exists a partition $\{\calE_\mu \mid \mu \in \mathsf{Par}(\lambda; m)\}$ of $\calE_{\lambda;m}$ satisfying the following two conditions:
\begin{enumerate}[label = {\bf C\arabic*.}]
\item For each $\mu \in \mathsf{Par}(\lambda; m)$, $\sum_{E \in \calE_\mu} \ch([\bfG_E]) = s_{\mu}$.
\item For each $\mu \in \mathsf{Par}(\lambda; m)$, there exists a total order $\prec_\mu$ on $\calE_\mu = \{E_1 \prec_\mu E_2 \prec_\mu \cdots \prec_\mu E_{|\calE_\mu|}\}$ and a filtration 
\[
M^\mu_0 = \{0\} \subseteq M^\mu_1 \subseteq M^\mu_2 \subseteq \cdots \subseteq M^\mu_{|\calE_\mu|} = X_\mu
\]
of $H_m(0)$-modules such that $\bfG_{E_i} \cong M^\mu_{i}/M^\mu_{i-1}$ for all $1 \le i \le |\calE_\mu|$.
\end{enumerate}
\end{conjecture}

Indeed, if this conjecture holds, then for each $m$ with $l_\lambda \le m \le n$, we have
\begin{align*}
U_{\lambda;m}
&= \ch([\bfG_{\lambda;m}])
= \sum_{E \in \calE_{\lambda;m}} \ch([\bfG_E])
\overset{(\textbf{C1})}{=} \sum_{\mu \in \Par(\lambda;m)} \sum_{E \in \calE_{\mu}} \ch([\bfG_E]) \\
&\overset{(\textbf{C2})}{=} \sum_{\mu \in \Par(\lambda;m)} \sum_{1 \le i \le |\calE_\mu|} \ch([M^\mu_i / M^\mu_{i-1}])
= \sum_{\mu \in \Par(\lambda;m)} \ch([X_\mu])
= \sum_{\mu \in \Par(\lambda;m)} s_\mu,
\end{align*}
which agrees with \cref{eq: schur expansion for genomic}.
This shows that the conjecture provides a representation-theoretic interpretation of the Schur expansion.

\section{Representation-theoretic interpretation of the Schur expansion of genomic Schur functions}
\label{sec: main result}

In this section, we prove \cref{conj: Kim and Yoo}.
Throughout this section, we assume that $\lambda = (\lambda_1, \lambda_2)$ is a partition of $n$ and that $m$ is an integer with $l_\lambda \le m \le n$.

We begin by recalling Pechenik’s map \cite{14Pechenik, 20Pechenik}
from $\IGLTm{\lambda}{m}$ to $\bigcup_{\mu \in \Par(\lambda;m)} \SYT(\mu)$.

\begin{algorithm}\label{alg: Phi map}
Let $T \in \IGLTm{\lambda}{m}$.
\begin{enumerate}[label = {\it Step \arabic*.}, leftmargin = 10ex]
\item 
Let $A := \calI(T)$
and let $B$ be the set of entries that appear in the second row immediately to the right of an element of $A$.
\item 
Delete the elements of $A$ from the first row of $T$ and the elements of $B$ from the second row of $T$.
\item
Insert the elements of $B$ into the first column of $T$, preserving increasingness.
\item 
Denote the resulting tableau by $S_T$ and return it.
\end{enumerate}
\end{algorithm}
In \cite[Proposition~2.1]{14Pechenik} and \cite[Proposition~4.3]{20Pechenik}, it was shown that the map
\begin{align}\label{eq: Phi map def}
\Phi_{\lambda; m}: \IGLTm{\lambda}{m} \ra \bigcup_{\mu \in \Par(\lambda;m)} \SYT(\mu), 
\quad T \mapsto S_T
\end{align}
is a bijection.

An inverse algorithm for \cref{alg: Phi map} appears in the proof of \cite[Proposition~2.1]{14Pechenik} in the case $\lambda_1=\lambda_2$.
It extends naturally to the case $\lambda_1 \ge \lambda_2$ as follows.

\begin{algorithm}\label{alg: Inverse of Phi}
Let $S \in \bigcup_{\mu \in \Par(\lambda; m)} \SYT(\mu)$.
\begin{enumerate}[label = {\it Step \arabic*.}, leftmargin = 10ex]
\item 
Let $T$ be the tableau consisting of the first two rows of $S$, and let $B$ be the set of entries of $S$ lying below the first two rows.
\item 
Insert the elements of $B$ into the second row of $T$, preserving increasingness.
\item 
Let $A$ be the set of entries that appear immediately to the left of an element of $B$ in the second row of $T$.
If $|A| < \lambda_1+\lambda_2-m$, then include the greatest entry in the second row of $T$ to $A$. 
(Note that this element is not in $A$, since each element of $A$ lies to the left of some element of $B$ in the second row.)

\item 
Insert the elements of $A$ into the first row of $T$, again preserving increasingness.
Denote the resulting tableau by $T_S$ and return it.
\end{enumerate}
\end{algorithm}
The above algorithm yields
\begin{align*}
\Phi_{\lambda; m}^{-1}(S) = T_S \quad \text{for $S \in \bigcup_{\mu \in \Par(\lambda;m)} \SYT(\mu)$.}
\end{align*}
It is remarked in \cite[Proposition~4.3]{20Pechenik} that $\Phi_{\lambda; m}$ and $\Phi_{\lambda; m}^{-1}$ preserve descent sets.
This can be verified by a straightforward argument.

\begin{example}
Let
\[
T =
\begin{array}{l}
\begin{ytableau}
1 & 2 & 4 & 5 \\
2 & 3 & 5 & 6
\end{ytableau}
\end{array}
\in \IGLTm{(4,4)}{6}.
\]
We apply \cref{alg: Phi map} to $T$.
{\it Step~1} gives
\[
A = \{2,5\}
\quad \text{and} \quad
B = \{3,6\}.
\]
Continuing the algorithm, we have
\[
S_T =
\begin{array}{l}
\begin{ytableau}
1 & 4 \\
2 & 5 \\
3 \\
6
\end{ytableau}
\end{array}.
\]
One sees that 
\[
\Des(T) = \{1,2,4,5\} = \Des(S_T).
\]

Conversely, let
\[
S =
\begin{array}{l}
\begin{ytableau}
1 & 4 \\
2 & 5 \\
3 \\
6
\end{ytableau}
\end{array}.
\]
We apply \cref{alg: Inverse of Phi} to $S$.
{\it Step 1} gives
\[
T =
\begin{array}{l}
\begin{ytableau}
1 & 4 \\
2 & 5
\end{ytableau}
\end{array}
\quad \text{and} \quad
B = \{3,6\}.
\]
In Step~2, inserting the elements of $B$ into $T$ gives
\[
T =
\begin{array}{l}
\begin{ytableau}
1 & 4 \\
2 & 3 & 5 & 6
\end{ytableau}
\end{array}.
\]
In {\it Step 3}, we have $A = \{2,5\}$.
{\it Step 4} then produces
\[
T_S =
\begin{array}{l}
\begin{ytableau}
1 & 2 & 4 & 5 \\
2 & 3 & 5 & 6
\end{ytableau}
\end{array}.
\]
\end{example}

\begin{lemma}\label{lem: equiv class has same shape}
Let $E \in \calE_{\lambda; m}$.
For any $T_1, T_2 \in E$, $\sh(\Phi_{\lambda; m}(T_1)) = \sh(\Phi_{\lambda; m}(T_2))$.
\end{lemma}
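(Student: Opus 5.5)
The plan is to compute the shape of $S_T=\Phi_{\lambda;m}(T)$ directly from \cref{alg: Phi map} and to show that it depends only on data that the definition of $\sim_{\lambda;m}$ forces to agree on an equivalence class.

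\emph{Step 1: the size of $A$ is fixed.} Since $\ell(\lambda)=2$ and rows of an increasing gapless tableau strictly increase, each value $i\in[1,m]$ occurs at most once in each row. Hence $|T^{-1}(i)|\in\{1,2\}$. For $i\in\calI(T)$, the value $i$ occurs exactly once in the first row and exactly once in the second row. Gaplessness gives $n=\sum_{i=1}^m |T^{-1}(i)| = m+|\calI(T)|$, so $|A|=|\calI(T)|=n-m=k_m$ for every $T\in\IGLTm{\lambda}{m}$.

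\emph{Step 2: read off the shape.} The second-row occurrences of the elements of $A$ occupy $k_m$ distinct boxes of the second row. Each such box has a right neighbour in the second row unless it is the box $(2,\lambda_2)$. Therefore $|B|=k_m$ if $T(2,\lambda_2)\notin\calI(T)$, and $|B|=k_m-1$ otherwise.

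Steps 2--3 of the algorithm then leave first row length $\lambda_1-k_m$ and second row length $\lambda_2-|B|$, and they add $|B|$ boxes of length one in the first column. So $\sh(S_T)$ is $\lambda^{(1)}_m$ in the first case and $\lambda^{(2)}_m$ in the second. One quick point to check is that the inserted elements of $B$ indeed form single boxes below row two. This is guaranteed because $\Phi_{\lambda;m}$ is known to land in $\bigcup_\mu\SYT(\mu)$. Consequently $\sh(\Phi_{\lambda;m}(T))$ is determined by the single condition that the box $(2,\lambda_2)$ lies in $T^{-1}(i)$ for some $i$ with $|T^{-1}(i)|>1$.

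\emph{Step 3: invariance under $\sim_{\lambda;m}$.} If $T_1\sim_{\lambda;m}T_2$, then projecting the defining equality of sets of pairs onto second coordinates gives
\[
\{T_1^{-1}(i)\mid i\in\calI(T_1)\}=\{T_2^{-1}(i)\mid i\in\calI(T_2)\}.
\]
Thus the union of the position sets of repeated entries is the same for $T_1$ and $T_2$. In particular, $(2,\lambda_2)$ belongs to this union for $T_1$ if and only if it does for $T_2$. By Step 2 the shapes coincide.

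I expect no real obstacle here. The only point requiring care is the bookkeeping in Step 2, namely matching $|B|\in\{k_m,k_m-1\}$ with the two cases in \cref{eq: lambda(i)}.
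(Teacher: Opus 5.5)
Your proof is correct and follows essentially the same route as the paper: the shape of $\Phi_{\lambda;m}(T)$ is $(\lambda_1-|A|,\lambda_2-|B|,1^{|B|})$, and $|A|$ and $|B|$ agree across an equivalence class because the position sets $T^{-1}(i)$, $i\in\calI(T)$, coincide. Your explicit count, $|A|=k_m$ always and $|B|=k_m$ or $k_m-1$ according to whether $(2,\lambda_2)$ holds a repeated entry, is exactly what the paper proves in the lemma immediately after this one, so you have just merged the two lemmas.
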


\begin{proof}
Let $T_1, T_2 \in E$, and let $(A_1,B_1)$ and $(A_2,B_2)$ be the pairs of sets obtained by applying \cref{alg: Phi map} to $T_1$ and $T_2$, respectively.
Since $T_1, T_2 \in E$, we have 
\[
|A_1| = |\calI(T_1)| = |\calI(T_2)| = |A_2|.
\]
Moreover, each $i_1 \in \calI(T_1)$ corresponds to a unique $i_2 \in \calI(T_2)$ with $T_1^{-1}(i_1)=T_2^{-1}(i_2)$.
Thus $|B_1| = |B_2|$.
For $i=1,2$, the shape $\sh(\Phi_{\lambda; m}(T_i))$ depends only on $|A_i|$ and $|B_i|$.
Therefore, $\sh(\Phi_{\lambda; m}(T_1)) = \sh(\Phi_{\lambda; m}(T_2))$.
\end{proof}

We recall from \cref{eq: lambda(i)} the definitions of $\lambda^{(1)}_m$ and $\lambda^{(2)}_m$.
For $x = 1,2$, define
\[
\calE_{\lambda;m}^{(x)} := \{E \in \calE_{\lambda;m} \mid \sh(\Phi_{\lambda;m}(T)) = \lambda_m^{(x)} \text{ for all $T \in E$} \},
\]
By \cref{lem: equiv class has same shape}, these sets are well defined.
The following lemma characterizes when an equivalence class $E$ lies in $\calE_{\lambda;m}^{(1)}$ or $\calE_{\lambda;m}^{(2)}$.

\begin{lemma}
Let $E \in \calE_{\lambda;m}$.
Then
\[
E \in \calE_{\lambda;m}^{(1)}
\quad \text{if and only if} \quad
\text{$T((2, \lambda_2)) \notin \calI(T)$ for all $T \in E$.}
\]
Equivalently,
\[
E \in \calE_{\lambda;m}^{(2)}
\quad \text{if and only if} \quad \text{$T((2, \lambda_2)) \in \calI(T)$ for all $T \in E$.}
\]
\end{lemma}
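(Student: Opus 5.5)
Our plan is to reduce the statement to a computation of $|A|$ and $|B|$ in \cref{alg: Phi map} for a single tableau $T$, and then use \cref{lem: equiv class has same shape} to pass to the whole class $E$.

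\emph{Step 1: compute $|A|$ and $|B|$.} Fix $T \in \IGLTm{\lambda}{m}$. Since rows and columns strictly increase and $\ell(\lambda)=2$, each value occurs at most twice, and if twice, then once in each row. Hence $|A| = |\calI(T)| = n-m = k_m$. Each $i \in A$ occupies a box $(2,j)$ in the second row. Its right neighbor $(2,j+1)$ exists unless $j=\lambda_2$, i.e.\ unless $T((2,\lambda_2)) = i$. So $|B| = k_m$ if $T((2,\lambda_2)) \notin \calI(T)$, and $|B| = k_m-1$ otherwise.

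\emph{Step 2: read off the shape.} After \cref{alg: Phi map}, the first row has $\lambda_1-k_m$ boxes and the second row has $\lambda_2-|B|$ boxes. The $|B|$ entries of $B$ are inserted into the first column below the second row. The resulting shape is therefore
\[
(\lambda_1-k_m,\ \lambda_2-k_m,\ 1^{k_m}) = \lambda^{(1)}_m \quad \text{or} \quad (\lambda_1-k_m,\ \lambda_2-k_m+1,\ 1^{k_m-1}) = \lambda^{(2)}_m,
\]
in the two respective cases. By \cref{eq: Phi map def}, $\Phi_{\lambda;m}(T)$ is a genuine SYT of shape in $\Par(\lambda;m)$, so the corresponding $\lambda^{(x)}_m$ is nonempty. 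The two shapes differ, since their second rows have different lengths. Consequently, $\sh(\Phi_{\lambda;m}(T)) = \lambda^{(1)}_m$ if and only if $T((2,\lambda_2)) \notin \calI(T)$.

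\emph{Step 3: pass to the class $E$.} By \cref{lem: equiv class has same shape}, all $T \in E$ have the same shape $\sh(\Phi_{\lambda;m}(T))$. Combined with Step 2, the condition $T((2,\lambda_2)) \in \calI(T)$ is either true for every $T \in E$ or false for every $T \in E$. This gives the first equivalence. The second follows because every class lies in exactly one of $\calE^{(1)}_{\lambda;m}$ and $\calE^{(2)}_{\lambda;m}$.

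The main point needing care is the bookkeeping in Step 2: one must check that the inserted column and the remaining rows really form the stated partition shape, and that the degenerate cases (for example $k_m=0$) cause no trouble. Both are guaranteed by Pechenik's bijection \cref{eq: Phi map def}, so we expect no real obstacle.
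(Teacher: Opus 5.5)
Your proposal is correct and follows essentially the same route as the paper. Both arguments compute $|A| = |\calI(T)| = k_m$ and note that $|B| = |A|$ exactly when the box $(2,\lambda_2)$ does not contain an element of $\calI(T)$. Both then read off the shape $(\lambda_1-|A|,\lambda_2-|B|,1^{|B|})$ and invoke \cref{lem: equiv class has same shape} to make the condition uniform over $E$. You only add a little more detail than the paper, such as why each value occurs at most twice and why $B$ has one element per non-final element of $A$ in the second row.
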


\begin{proof}
By \cref{lem: equiv class has same shape}, it suffices to show that
\[
E\in \calE_{\lambda;m}^{(1)}
\quad \text{if and only if} \quad
T((2,\lambda_2))\notin \calI(T)\ \text{for some } T\in E.
\]

Let $T \in E$, and let $(A,B)$ be the pair of sets obtained by applying \cref{alg: Phi map} to $T$.
By the construction of $S_T$, we have $|A|=k_m$ and
\[
\sh(\Phi_{\lambda;m}(T))=(\lambda_1-|A|,\lambda_2-|B|,1^{|B|}).
\]
Moreover, by the choice of $B$, we have $|A|=|B|$ if and only if the rightmost box in the second row, namely $(2,\lambda_2)$, does not contain an element of $A$.
Thus,
\[
\sh(\Phi_{\lambda;m}(T))=(\lambda_1-k_m,\lambda_2-k_m,1^{k_m})
\quad \text{if and only if} \quad
T((2,\lambda_2))\notin \calI(T).
\]
This proves the assertion.
\end{proof}

We now collect the notation and definitions needed to prove \cref{conj: Kim and Yoo}.
For $x=1,2$, we define a partial order $\preceq^{(x)}$ on $\calE_{\lambda;m}^{(x)}$ as follows.
Let $E_1,E_2 \in \calE_{\lambda;m}^{(x)}$, and fix representatives $T_1 \in E_1$ and $T_2 \in E_2$.
Write
\[
\calI(T_1)=\{i_1<i_2<\cdots<i_{n-m}\}
\quad\text{and}\quad
\calI(T_2)=\{j_1<j_2<\cdots<j_{n-m}\}.
\]
We define
\begin{align}\label{eq: def of order(x)}
E_1 \preceq^{(x)} E_2
\quad \text{if} \quad
c_\sfb^{(i_k)}(T_1)\le c_\sfb^{(j_k)}(T_2)
\quad\text{for all }1\le k\le n-m.
\end{align}
Fix a linear extension $\preceq^{(x)}_e$ of $\preceq^{(x)}$ and enumerate the elements of $\calE_{\lambda;m}^{(x)}$ by
\begin{align}\label{eq: enumeration of E}
E^{(x)}_1 \preceq^{(x)}_e E^{(x)}_2 \preceq^{(x)}_e \cdots \preceq^{(x)}_e E^{(x)}_{|\calE_{\lambda;m}^{(x)}|}.
\end{align}

We extend the map $\Phi_{\lambda;m}$ in \cref{eq: Phi map def} to a linear map
\[
\Psi_{\lambda;m}:\bigoplus_{E\in\calE_{\lambda;m}}\bfG_E \ra X_{\lambda^{(1)}}\oplus X_{\lambda^{(2)}},
\quad T \mapsto \Phi_{\lambda;m}(T).
\]
Since $\Phi_{\lambda; m}: \IGLTm{\lambda}{m} \ra \bigcup_{\mu \in \Par(\lambda;m)} \SYT(\mu)$ is a bijection, so is $\Psi_{\lambda; m}$.

For $x=1,2$, set $M^{(x)}_0:=\{0\}$.
For $1\le j\le |\calE_{\lambda;m}^{(x)}|$, define a $\C$-vector space
\begin{align}\label{def of M_j}
M^{(x)}_j:=\Psi_{\lambda;m}\!\left(\bigoplus_{l=1}^j \bfG_{E^{(x)}_l}\right).
\end{align}
We are now ready to prove \cref{conj: Kim and Yoo}.

\begin{theorem}\label{thm: main}
Let $x \in \{1,2\}$ and $1 \le j \le |\calE_{\lambda;m}^{(x)}|$.
\begin{enumerate}[label = {\rm (\arabic*)}]
\item
The vector space $M^{(x)}_{j}$ is an $H_m(0)$-submodule of $X_{\lambda^{(x)}}$. 
\item 
The map
\[
\oPsi_{\lambda; m}: \bfG_{E_j^{(x)}} \ra M^{(x)}_{j} / M^{(x)}_{j-1}, 
\quad T \mapsto \Psi_{\lambda; m}(T) + M^{(x)}_{j-1}
\]
is an $H_m(0)$-module isomorphism.
\end{enumerate}
\end{theorem}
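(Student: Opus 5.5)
The whole argument reduces to one comparison, carried out generator by generator. Fix $x$, $j$, $T\in E:=E^{(x)}_j$, an index $i\in[1,m-1]$, and put $S=\Phi_{\lambda;m}(T)\in\SYT(\lambda^{(x)}_m)$. The key claim is
\[
\pi_i\cdot S-\Psi_{\lambda;m}(\pi_i\cdot T)\in M^{(x)}_{j-1}.
\]
Both (1) and (2) follow from this by induction on $j$. For (1): $\pi_i\cdot T\in\bfG_E$ by \cite[Theorem 3.11]{24KY}, so $\pi_i\cdot S\in M^{(x)}_j$, and $M^{(x)}_j$ is closed under the generators. For (2): $\oPsi_{\lambda;m}$ is a linear bijection because $\Psi_{\lambda;m}$ is bijective on bases, and the claim says it intertwines each $\pi_i$.

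To prove the claim I compare the three cases of the $\bfG$-action with the three cases of the $X$-action. Since $\Phi_{\lambda;m}$ preserves descent sets, $i\notin\Des(T)$ iff $i\notin\Des(S)$. In that case both actions fix the element ($i$ strictly left of $i+1$ in $S$, i.e.\ not a descent), so the difference is $0$. If $i\in\Des(T)$, I split according to how the letters $i,i+1$ sit in $T$ relative to $\calI(T)$ and to the sets $A,B$ of \cref{alg: Phi map}.

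\begin{enumerate}[label=(\alph*)]
\item \emph{Non-attacking descent of $T$.} I expect $s_i\cdot T\in E$. I will check that $\Phi_{\lambda;m}(s_i\cdot T)=s_i\cdot S$, and that $i$ lies strictly right of $i+1$ in $S$, so $\pi_i S=s_iS$. The check is by tracking the deleted and moved entries: swapping all $i$'s and $i+1$'s commutes with Steps 1--3.
\item \emph{Attacking descent, with $i,i+1$ in one column of $S$.} Then $\pi_iS=0=\Psi(\pi_iT)$.
\item \emph{Attacking descent, otherwise.} Here $\pi_iT=0$ while $\pi_iS=s_iS$ is a different SYT, so I must show $s_iS\in M^{(x)}_{j-1}$. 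Write $s_iS=\Phi_{\lambda;m}(T')$ with $T'=\Phi_{\lambda;m}^{-1}(s_iS)$ computed by \cref{alg: Inverse of Phi}. The shape is unchanged, so $T'$ lies in some class of $\calE^{(x)}_{\lambda;m}$, and it suffices to show that this class is strictly $\prec^{(x)}$ the class $E$.
\end{enumerate}

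The expected mechanism in case (c) is as follows. The attacking configuration arises either when an element of $\calI(T)$ equal to $i$ or $i+1$ has its duplicated box adjacent to the other letter (condition (a) or (b)), or when $i+1$ (or $i$) is an element of $B$ that was moved to the first column. Swapping $i,i+1$ in $S$ then makes the inverse algorithm choose a duplicated entry one step further left in the second row. Consequently the bottom-column sequence $(c_\sfb^{(i_k)})$ of $T'$ weakly decreases componentwise, with one strict decrease. This gives $[T']\preceq^{(x)}[T]$ and $[T']\neq E$, hence $[T']=E^{(x)}_l$ with $l<j$ in the chosen linear extension \cref{eq: enumeration of E}.

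Case (c) is the main obstacle. It requires a careful enumeration of the positions of $i$ and $i+1$: first row, second row, or a moved entry in the first column, and whether $i$ or $i+1$ is a repeated value. For each position one must verify two things: that $s_iS$ is really handled by one of cases (a)--(c) consistently, and that the column of the affected duplicate strictly decreases. I would first treat the subcase where $i\in\calI(T)$ or $i+1\in\calI(T)$ with the boxes in adjacent positions (attacking condition (a)). Then I would treat condition (b), where $i+1$ lies weakly above the bottom $i$; there the swap forces the duplicated letter's bottom box to shift left. In each subcase I use the explicit description of $A$ and $B$ from \cref{alg: Inverse of Phi}.
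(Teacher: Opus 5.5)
\textbf{Gap in the case (c) mechanism.} Your overall architecture matches the paper's. You compare the three cases of the $\bfG_{\lambda;m}$-action with the Searles action on $S=\Phi_{\lambda;m}(T)$. You get agreement for non-descents and for non-attacking descents via $\Phi_{\lambda;m}(s_i\cdot T)=s_i\cdot S$. For attacking descents you show that either $\pi_i\cdot S=0$ or $\Phi_{\lambda;m}^{-1}(s_i\cdot S)$ lies in an earlier class.

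The gap is the mechanism you propose for case (c): it is false that the sequence $(c_\sfb^{(i_k)})$ always drops strictly somewhere. Consider the subcase where $i$ occurs only in the first row of $T$ and $i+1$ occurs in both rows. Then $T'=\Phi_{\lambda;m}^{-1}(s_i\cdot S)$ is obtained from $T$ by changing the bottom $i+1$ into $i$. The repeated value becomes $i$, its top box moves one column left, and its bottom box does not move.

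Concretely, take $\lambda=(3,3)$, $m=5$, $i=2$, and $T$ with rows $(1,2,3)$ and $(3,4,5)$.
\begin{enumerate}[label={\rm(\roman*)}]
\item $\pi_2\cdot T=0$, since the descent is attacking.
\item $S$ has rows $(1,2)$, $(3,5)$, $(4)$, and $\pi_2\cdot S=s_2\cdot S$.
\item $T'$ has rows $(1,2,3)$ and $(2,4,5)$.
\end{enumerate}
In both $T$ and $T'$ the unique repeated value has its bottom box in column $1$, so the two classes have identical $c_\sfb$-sequences. Yet $[T]=\{T\}\neq[T']$.

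\textbf{Consequence for the order.} The relation $\preceq^{(x)}$ is therefore not antisymmetric. Your step ``$[T']\preceq^{(x)}E$ and $[T']\neq E$, so $[T']$ comes before $E$'' fails, because $E\preceq^{(x)}[T']$ holds as well. An enumeration putting $[T]$ first gives $M^{(1)}_1=\C\,S$, which is not stable under $\pi_2$. The paper's own case (ii) has the same hole: it computes $c_\sfb^{(i)}(T')=c_\sfb^{(i+1)}(T)$ and then asserts $l<j$. So the statement itself needs a finer order.

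\textbf{A repair.} Compare top columns as well: declare $E_1\preceq E_2$ iff $c_\sft^{(i_k)}(T_1)\le c_\sft^{(j_k)}(T_2)$ and $c_\sfb^{(i_k)}(T_1)\le c_\sfb^{(j_k)}(T_2)$ for all $k$.
\begin{enumerate}[label={\rm(\roman*)}]
\item With two rows, each path $\Gamma_i$ is forced by its two boxes. Hence a class is determined by the positions of its repeated entries, and this relation is a genuine partial order.
\item In the subcase above, the $c_\sft$ of the affected duplicate drops by one and nothing else changes.
\item In the remaining subcase ($i$ repeated, and the entry left of its bottom box is a singleton), that duplicate's $c_\sfb$ drops by one and its $c_\sft$ does not increase. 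Indexing is preserved in both subcases, since no repeated value lies strictly between the old and new one.
\end{enumerate}
So $[T']$ strictly precedes $E$ in every linear extension. With this order, and with the case analysis written out, your reduction to $\pi_i\cdot S-\Psi_{\lambda;m}(\pi_i\cdot T)\in M^{(x)}_{j-1}$ yields both parts.
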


\begin{proof}
(1) 
By the construction of $M_j^{(x)}$, it is clear that $\{\Phi_{\lambda;m}(T) \mid T \in \bigcup_{l = 1}^j E_l^{(x)}\}$ is a basis of $M_j^{(x)}$.
Thus it suffices to show that
\[
\pi_i \cdot \Phi_{\lambda;m}(T) \in M_j^{(x)}
\quad \text{for all $1 \le i \le m-1$ and $T \in \bigcup_{l = 1}^j E_l^{(x)}$.}
\]

Fix $1 \le i \le m-1$ and $T \in \bigcup_{l = 1}^j E_l^{(x)}$.
For $1 \le k \le m$, let 
\begin{align*}
\row^{(k)}(T) := \{x \mid (x,y) \in T^{-1}(k)\}.
\end{align*}
We consider three cases according to the value of $\pi_i \cdot T$.
\medskip

\noindent
{\bf Case 1: $\pi_i \cdot T = T$.}
In this case, $i\notin \Des(T)$, and thus one of the following holds:
\begin{enumerate}[label = -, itemsep = 0.5ex]
\item 
$\row^{(i)}(T) = \row^{(i+1)}(T) = \{1\}$
\item 
$\row^{(i)}(T) = \row^{(i+1)}(T) = \{2\}$
\item 
$\row^{(i)}(T) = \{2\}$ and $\row^{(i+1)}(T) = \{1\}$
\item 
$\row^{(i)}(T) = \{2\}$ and $\row^{(i+1)}(T) = \{1,2\}$
\item 
$\row^{(i)}(T) = \{1, 2\}$ and $\row^{(i+1)}(T) = \{1\}$
\end{enumerate}
In each case, after applying \cref{alg: Phi map} to $T$, we see that 
$i$ lies strictly to the left of $i+1$ in $\Phi_{\lambda;m}(T)$.
Therefore, 
\[
\pi_i \cdot \Phi_{\lambda; m}(T)
= \Phi_{\lambda; m}(T) \in M_j^{(x)}.
\]

\medskip

\noindent
{\bf Case 2: $\pi_i \cdot T = 0$.}
In this case, one of the following holds:
\begin{enumerate}[label = {\rm (\roman*)}, itemsep = 0.5ex]
\item 
$\row^{(i)}(T) = \{1\}$, $\row^{(i+1)}(T) = \{2\}$, and $c^{(i)}(T) = c^{(i+1)}(T)$
\item
$\row^{(i)}(T) = \{1\}$ and $\row^{(i+1)}(T) = \{1,2\}$
\item
$\row^{(i)}(T) = \{1,2\}$ and $\row^{(i+1)}(T) = \{2\}$
or 
$\{1,2\}$ 
\end{enumerate}

In case (i),
$\row^{(i)}(T) = {1}$, $\row^{(i+1)}(T) = {2}$, and $c^{(i)}(T) = c^{(i+1)}(T)$.
Then the entry immediately to the left of $i+1$ in the second row of $T$, say $u$, occurs exactly once in $T$.
It follows that 
\[
r^{(i)}(\Phi_{\lambda; m}(T)) = 1 
\quad \text{and} \quad
r^{(i+1)}(\Phi_{\lambda; m}(T)) = 2.
\]
Let $A$ and $B$ be the sets defined in \cref{alg: Phi map}.
Since $|T^{-1}(u)| = 1$, by the definitions of $A$ and $B$, we have
\[
|\{ a \in A \mid c_{\sft}^{(a)}(T) < c_{\sft}^{(i)}(T) \}| 
= |\{ b \in B \mid c_{\sfb}^{(b)}(T) < c_{\sfb}^{(i+1)}(T) \}|.
\]
It follows that
\[
c^{(i)}(\Phi_{\lambda; m}(T)) = c^{(i+1)}(\Phi_{\lambda; m}(T)).
\]
Thus, $\pi_i \cdot \Phi_{\lambda; m}(T) = 0 = \Phi_{\lambda; m}(\pi_i \cdot T)$.
\medskip

In case (ii), $\row^{(i)}(T) = \{1\}$ and $\row^{(i+1)}(T) = \{1,2\}$.
Applying \cref{alg: Phi map} to $T$, we have
\[
c^{(i)}(\Phi_{\lambda; m}(T)) = c^{(i+1)}(\Phi_{\lambda; m}(T)) \quad \text{or} \quad 
c^{(i)}(\Phi_{\lambda; m}(T)) > c^{(i+1)}(\Phi_{\lambda; m}(T)).
\]
If $c^{(i)}(\Phi_{\lambda; m}(T)) = c^{(i+1)}(\Phi_{\lambda; m}(T))$, then \[
\pi_i \cdot \Phi_{\lambda; m}(T) = 0 \in M^{(x)}_j.
\]
Assume that $c^{(i)}(\Phi_{\lambda; m}(T)) > c^{(i+1)}(\Phi_{\lambda; m}(T))$.
Then $\pi_i \cdot \Phi_{\lambda; m}(T) = s_i \cdot \Phi_{\lambda; m}(T)$, and hence
\[
\sh(\pi_i \cdot \Phi_{\lambda; m}(T)) 
= \sh(s_i \cdot \Phi_{\lambda; m}(T))
= \sh(\Phi_{\lambda; m}(T)) = \lambda^{(x)}.
\]
Applying \cref{alg: Inverse of Phi} to $s_i \cdot \Phi_{\lambda; m}(T)$ yields, for $B \in \tyd(\lambda)$,
\[
\Phi_{\lambda; m}^{-1} (s_i \cdot \Phi_{\lambda; m}(T))(B) = \begin{cases}
i & \text{if $B = (r_{\sfb}^{(i+1)}(T),\, c_{\sfb}^{(i+1)}(T))$,} \\
T(B) & \text{otherwise.}
\end{cases}
\]
It follows that
\begin{align*}
\calI(\Phi_{\lambda; m}^{-1} (s_i \cdot \Phi_{\lambda; m}(T))) 
& = \calI(T) \cup \{i\} \setminus \{i+1\}, \\
c_\sft^{(i)}\left(
\Phi_{\lambda; m}^{-1} (s_i \cdot \Phi_{\lambda; m}(T)) 
\right)
& = c_\sft^{(i+1)}(T) - 1, \quad \text{and} \\
c_\sfb^{(i)}\left(
\Phi_{\lambda; m}^{-1} (s_i \cdot \Phi_{\lambda; m}(T))
\right)
& = c_\sfb^{(i+1)}(T).
\end{align*}
By the definition of $\preceq^{(x)}$ and the enumeration in \cref{eq: enumeration of E}, we have $\Phi_{\lambda; m}^{-1} (\pi_i \cdot \Phi_{\lambda; m}(T)) \in E_{l}^{(x)}$ for some $1 \le l < j$, and thus
\[
\pi_i \cdot \Phi_{\lambda; m}(T) \in M^{(x)}_l \subseteq M^{(x)}_j.
\]
\medskip

In case (iii), $\row^{(i)}(T) = \{1,2\}$ and $\row^{(i+1)}(T) = \{2\}$ or $\{1,2\}$.
Let $a$ be the entry immediately to the left of $i$ in the second row of $T$.
If $a \in \calI(T) \cup \{-\infty\}$, then
\[
c^{(i)}(\Phi_{\lambda; m}(T)) = c^{(i+1)}(\Phi_{\lambda; m}(T)) = 1,
\]
thus $\pi_i \cdot \Phi_{\lambda; m}(T) = 0 \in M^{(x)}_j$.
Suppose that $a \notin \calI(T) \cup \{-\infty\}$.
Then
\[
r^{(i)}(\Phi_{\lambda; m}(T)) = 2, \quad c^{(i)}(\Phi_{\lambda; m}(T)) > 1,
\quad \text{and} \quad
c^{(i+1)}(\Phi_{\lambda; m}(T)) = 1.
\]
It follows that $\pi_i \cdot \Phi_{\lambda; m}(T) = s_i \cdot \Phi_{\lambda; m}(T)$, so 
\[
\sh(\pi_i \cdot \Phi_{\lambda; m}(T)) = \sh(s_i \cdot \Phi_{\lambda; m}(T)) = \sh(\Phi_{\lambda; m}(T)) = \lambda^{(x)}.
\]
Let $w_0, w_1, \ldots, w_t$ be the entries in the first row of $T$ such that
\[
w_0 < a < w_1, \quad
c^{(w_{u+1})}_\sft(T) = c^{(w_u)}_\sft(T) + 1, \quad \text{and} \quad
c^{(w_{t})}_\sft(T) = c^{(i)}_\sft(T) - 1.
\]
Since $a$ is immediately to the left of $i$ in the second row of $T$, $w_1, \ldots, w_t$ cannot appear in the second row of $T$.
Thus, $|T^{-1}(w_u)| = 1$ for all $1 \le u \le t$.
Applying \cref{alg: Inverse of Phi} to $s_i \cdot \Phi_{\lambda; m}(T)$ yields, for $B \in \tyd(\lambda)$,
\[
\Phi_{\lambda; m}^{-1} (s_i \cdot \Phi_{\lambda; m}(T))(B) = \begin{cases}
a & \text{if $B = T^{-1}(w_1)$,} \\
w_p & \text{if $B = T^{-1}(w_{p+1})$ for $1 \le p \le t-1$,} \\
w_t & \text{if $B = (r_{\sft}^{(i)}(T),\, c_{\sft}^{(i)}(T))$,} \\
T(B) & \text{otherwise.}
\end{cases}
\]
It follows that
\begin{align*}
\calI(\Phi_{\lambda; m}^{-1} (s_i \cdot \Phi_{\lambda; m}(T))) 
& = \calI(T) \cup \{a\} \setminus \{i\}, \\
c_\sft^{(a)}\left(
\Phi_{\lambda; m}^{-1} (s_i \cdot \Phi_{\lambda; m}(T)) 
\right)
& < c_\sft^{(i)}(T), \quad \text{and} \\
c_\sfb^{(a)}\left(
\Phi_{\lambda; m}^{-1} (s_i \cdot \Phi_{\lambda; m}(T))
\right)
& = c_\sfb^{(i)}(T) - 1.
\end{align*}
By the definition of $\preceq^{(x)}$ and the enumeration in \cref{eq: enumeration of E}, we have $\Phi_{\lambda; m}^{-1} (\pi_i \cdot \Phi_{\lambda; m}(T)) \in E_{l}^{(x)}$ for some $1 \le l < j$, and thus
\[
\pi_i \cdot \Phi_{\lambda; m}(T) \in M^{(x)}_l \subseteq M^{(x)}_j.
\]
\smallskip

\noindent
{\bf Case 3: $\pi_i \cdot T = s_i \cdot T$.}
In this case, 
\[
\row^{(i)}(T) = \{1\}, 
\quad
\row^{(i+1)}(T) = \{2\}, 
\quad \text{and} \quad 
c^{(i)}(T) > c^{(i+1)}(T).
\]
Applying \cref{alg: Phi map} to $T$ yields
\[
r^{(i)}(\Phi_{\lambda; m}(T)) = 1, 
\quad
r^{(i+1)}(\Phi_{\lambda; m}(T)) > 1, 
\quad \text{and} \quad 
c^{(i)}(\Phi_{\lambda;m}(T)) > c^{(i+1)}(\Phi_{\lambda;m}(T)).
\]
Thus, $\pi_i \cdot \Phi_{\lambda; m}(T) = s_i \cdot \Phi_{\lambda; m}(T)$.
Since there is no integer between $i$ and $i+1$, applying \cref{alg: Phi map} to $s_i \cdot T$ yields 
\[
\Phi_{\lambda; m}(s_i \cdot T) = s_i \cdot \Phi_{\lambda; m}(T).
\]
Thus, $\pi_i \cdot \Phi_{\lambda; m}(T) = \Phi_{\lambda; m}(\pi_i \cdot T)$.
\bigskip

(2) We have shown in the proof of (1) that for $1 \le i \le m-1$ and $T \in  E_j^{(x)}$, the following statements hold:
\begin{enumerate}[label = {\rm (\roman*)}, itemsep = 0.5ex]
\item 
If $\pi_i \cdot T = T$, then $\pi_i \cdot \Psi_{\lambda; m}(T) = \Psi_{\lambda; m}(\pi_i \cdot T)$.
\item
If $\pi_i \cdot T = 0$, then either $\pi_i \cdot \Psi_{\lambda; m}(T) = 0$ or $\pi_i \cdot \Psi_{\lambda; m}(T) \in M^{(x)}_l$ for some $1 \le l < j$.
\item
If $\pi_i \cdot T = s_i \cdot T$, then $\pi_i \cdot \Psi_{\lambda; m}(T) 
= \Psi_{\lambda; m}(\pi_i \cdot T)$.
\end{enumerate}
Moreover, since $\Psi_{\lambda;m}$ is a bijection,
the map
\[
\oPsi_{\lambda; m}: \bfG_{E_j^{(x)}} \ra 
\left(\bigoplus_{l=1}^j \bfG_{E^{(x)}_l}\right) \bigg/ \left(\bigoplus_{l=1}^{j-1} \bfG_{E^{(x)}_l}\right),
\quad T \mapsto \Psi_{\lambda; m}(T) + \left(\bigoplus_{l=1}^{j-1} \bfG_{E^{(x)}_l}\right)
\]
is also a bijection.
Therefore, $\oPsi_{\lambda; m}$ is an $H_m(0)$-module isomorphism.
\end{proof}

We conclude by explaining why \cref{thm: main} implies \cref{conj: Kim and Yoo}. 
By definition, the sets $\calE_{\lambda;m}^{(1)}$ and $\calE_{\lambda;m}^{(2)}$ form a partition of $\calE_{\lambda;m}$. 
Fix $x \in \{1,2\}$ such that $\lambda_m^{(x)} \neq \emptyset$, and set $\mu := \lambda_m^{(x)}$ and $\calE_\mu := \calE_{\lambda;m}^{(x)}$. 
Then, for any total order chosen in \cref{eq: enumeration of E}, \cref{thm: main}(2) shows that the $H_m(0)$-modules defined in \cref{def of M_j} give a filtration
\[
M_{0}^{(x)} = \{0\} \subseteq M_{1}^{(x)} \subseteq \cdots \subseteq M_{|\calE_{\lambda;m}^{(x)}|}^{(x)} = X_\mu
\]
whose successive quotients are isomorphic to the corresponding modules $G_{E_j^{(x)}}$. 
Thus condition {\bf C2} of \cref{conj: Kim and Yoo} holds. 
Moreover, since $\ch([X_\mu]) = s_\mu$, the additivity of the quasisymmetric characteristic along the above filtration yields
$\sum_{E \in \calE_\mu} \ch([G_E]) = s_\mu$.
Hence condition {\bf C1} also holds, and therefore \cref{conj: Kim and Yoo} follows.


\bibliographystyle{abbrv}
\bibliography{references}

\end{document}